\newtheorem{theorem}{Theorem}[section]
\newtheorem{lemma}[theorem]{Lemma}
\theoremstyle{definition}
\newtheorem{definition}[theorem]{Definition}
\newtheorem{remark}[theorem]{Remark}
\newcommand{\id}{\text{id}}
\newcommand{\Aut}{\text{Aut}}
\newcommand{\Rep}{\text{Rep}}
\newcommand{\Vect}{\text{Vec}}
\newcommand{\g}{\mathfrak{g}}
\renewcommand{\b}{\mathfrak{b}}
\newcommand{\ot}{\otimes}
\newcommand{\ben}{\begin{enumerate}}
\newcommand{\een}{\end{enumerate}}
\newcommand{\C}{{\mathcal C}}
\newcommand{\D}{{\mathcal D}}
\newcommand{\E}{\mathcal{E}}
\newcommand{\Z}{{\mathcal Z}}
\begin{document}

\title[The small quantum group as a quantum double]
{The small quantum group as a quantum double}
\author{Pavel Etingof}
\address{Department of Mathematics, Massachusetts Institute of Technology,
Cambridge, MA 02139, USA} \email{etingof@math.mit.edu}

\author{Shlomo Gelaki}
\address{Department of Mathematics, Technion-Israel Institute of
Technology, Haifa 32000, Israel} \email{gelaki@math.technion.ac.il}

\date{\today}




\begin{abstract}
We prove that the quantum double of the quasi-Hopf algebra
$A_q(\g)$ of dimension $n^{\dim \g}$ attached in \cite{eg} to
a simple complex Lie algebra $\g$ and a primitive root of unity
$q$ of order $n^2$ is equivalent to Lusztig's small quantum group
${\mathfrak u}_q(\g)$ (under some conditions on $n$).
We also give a conceptual construction of $A_q(\g)$ using the
notion of de-equivariantization
of tensor categories.
\end{abstract}

\maketitle

\section{Introduction}

It is well known from the work of Drinfeld \cite{D} that the quantum
group $U_q(\g)$ attached to a simple complex Lie algebra $\g$ can be
produced by the quantum double construction. Namely, the quantum
double of the quantized Borel subalgebra $U_q(\b)$ is the product of
$U_q(\g)$ with an extra copy of the Cartan subgroup $U_q({\mathfrak
h})$, which one can quotient out and get the pure $U_q(\g)$. This
principle applies not only to quantum groups with generic $q$, but
also to Lusztig's small quantum groups at roots of unity,
${\mathfrak u}_q(\g)$ (\cite{L1,L2}). However, ${\mathfrak u}_q(\g)$
itself (without an additional Cartan) is not, in general, a quantum
double of anything: indeed, its dimension is $d=m^{\dim \g}$ (where
$m$ is the order of $q$), which is not always a square.

However, in the case when $m=n^2$ (so that the dimension $d$ is a
square), we have introduced in \cite{eg}, Section 4, a quasi-Hopf
algebra $A_q=A_q(\g)$ of dimension $d^{1/2}$, constructed out of a
Borel subalgebra ${\mathfrak b}$ of $\g$. So one might suspect that
the quantum double of $A_q(\g)$ is twist equivalent to ${\mathfrak
u}_q(\g)$. This indeed turns out to be the case (under some
conditions on $n$), and is the main result of this note. In other
words, our main result is that the Drinfeld center ${\mathcal
Z}(\Rep(A_q(\g)))$ of the category of representations of $A_q(\g)$
is $\Rep ({\mathfrak u}_q(\g))$.

We prove our main result by showing that
the category $\Rep({\mathfrak u}_q({\mathfrak b}))$ of representations
of the quantum Borel subalgebra ${\mathfrak{u}}_q({\mathfrak b})$ is the equivariantization
of the category $\Rep(A_q(\g))$ with respect to an action of a
certain finite abelian group. Thus, $\Rep(A_q(\g))$ can be
conceptually defined as a
de-equivariantization of $\Rep({\mathfrak u}_q(\g))$. So,
one may say that the main outcome of this paper is
a demystification of the quasi-Hopf algebra $A_q(\g)$
constructed ``by hand'' in \cite{eg}.

The structure of the paper is as follows. In Section 2 we recall the
theory of equivariantization and de-equivariantization of tensor
categories. In Section 3 we recall the construction of the
quasi-Hopf algebra $A_q(\g)$ from the paper \cite{eg}. In Section 4
we state the main results. Finally, Section 5 contains proofs.

{\bf Acknowledgments.} The research of the first author was
partially supported by the NSF grant DMS-0504847.
The second author was supported by The Israel
Science Foundation (grant No. 125/05).
Both authors were supported by BSF grant No. 2002040.

\section{Equivariantization and de-equivariantization}

The theory of equivariantization and de-equivariantization of tensor
categories was developed in \cite{B,M} in the setting of fusion
categories; it is now a standard technique in the theory of fusion
categories, and has also been used in the setting of the Langlands
program \cite{F}. A detailed description of this theory is given in
\cite{DGNO} (see also \cite{ENO}, Sections 2.6 and 2.11). This
theory extends without major changes to the case of finite tensor
categories (as defined in \cite{eo}), i.e, even if the
semisimplicity assumption is dropped. Let us review the main
definitions and results of this theory.

\subsection{Group actions}
Let $\C$ be a finite tensor category (all categories and algebras
in this paper are over $\Bbb C$). Consider the category
$\underline{\Aut}(\C)$, whose objects are tensor auto-equivalences
of $\C$ and whose morphisms are isomorphisms of tensor functors. The
category $\underline{\Aut}(\C)$ has an obvious structure of a
monoidal category, in which the tensor product is the composition of
tensor functors.

Let $G$ be a group, and let $\underline{G}$ denote the category
whose objects are elements of $G$, the only morphisms are the
identities and the tensor product is given by multiplication in $G$.

\begin{definition} \label{action}
An {\em action} of a group $G$ on a finite tensor category $\C$ is a
monoidal functor $\underline{G}\to \underline{\Aut}(\C)$.

If $\C$ is equipped with a braided structure we say
that an action $\underline{G}\to \underline{\Aut}(\C)$
respects the braided structure if the image of $\underline{G}$ lies
in $\underline{\Aut}^{br}(\C)$, where
$\underline{\Aut}^{br}(\C)$ is the full subcategory of
$\underline{\Aut}(\C)$ consisting of braided equivalences.
\end{definition}

\subsection{Equivariantization.}
Let a finite group $G$ act on a finite tensor
category $\C$.
For any $g\in G$ let $F_g\in
\underline{\Aut}(\C)$ be the corresponding functor and for any $g,h\in G$ let $\gamma_{g,h}$
be the isomorphism $F_g\circ F_h\simeq F_{gh}$ that defines the
tensor structure on the functor $\underline{G}\to \underline{\Aut}(\C)$.
A {\em $G$-equivariant object\,} of $\C$ is
an object $X\in \C$ together with isomorphisms $u_g: F_g(X)\simeq X$ such that the diagram
\begin{equation*}
\label{equivariantX}
\xymatrix{F_g(F_h(X))\ar[rr]^{F_g(u_h)} \ar[d]_{\gamma_{g,h}(X)
}&&F_g(X)\ar[d]^{u_g}\\ F_{gh}(X)\ar[rr]^{u_{gh}}&&X}
\end{equation*}
commutes for all $g,h\in G$.
One defines morphisms of equivariant objects to be morphisms in $\C$ commuting with $u_g,\;
g\in G$. The category of $G$-equivariant objects of $\C$ will be denoted by $\C^G$. It is
called the {\bf equivariantization} of $\C$.

Note that $\Vect^G=\Rep(G)$, so there is a natural inclusion $\iota:
\Rep (G)\to \C^G$.

One of the main results about equivariantization is the following
theorem (see \cite{ENO}, Proposition 2.10 for the semisimple
case; in the non-semisimple situation, the proof is parallel).

\begin{theorem}\label{help1}
Let $G$ be a finite group acting on a finite tensor category $\C$. Then
$\Rep(G)$ is a Tannakian subcategory of the Drinfeld center $\Z
(\C^G)$ (i.e., the braiding of $\Z(\C^G)$
restricts to the usual symmetric braiding of $\Rep(G)$), and the composition
$$
\Rep(G)\to \Z(\C^G)\to \C^G
$$
(where the last arrow is the forgetful functor) is the natural
inclusion $\iota$.

If $\C$ is a braided category, and
the $G$-action preserves the braided structure,
then $\C^G$ is also braided.
Thus $\C^G$ is a full subcategory of $\Z(\C^G)$,
and the inclusion $\iota$ factors through $\C^G$.
Thus in this case $\Rep(G)$ is a Tannakian subcategory of
$\C^G$.
\end{theorem}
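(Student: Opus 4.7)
The plan is to produce a braided tensor functor $\widetilde\iota: \Rep(G)\to \Z(\C^G)$ that lifts $\iota$ along the forgetful functor $\Z(\C^G)\to \C^G$, whose image realizes $\Rep(G)$ as a Tannakian subcategory. First I construct $\iota$. Given $V\in \Rep(G)$ with representation $\rho_V$, set $\iota(V):= V\otimes \mathbf{1}_\C$, viewed as a direct sum of $\dim V$ copies of the unit of $\C$. Because each $F_g$ is a tensor functor, its unit constraint supplies a canonical isomorphism $\phi_g: F_g(\mathbf{1}_\C)\simeq \mathbf{1}_\C$; combining this with $\rho_V(g)$ gives
\[
u_g: F_g(V\otimes \mathbf{1}_\C)\xrightarrow{\sim} V\otimes F_g(\mathbf{1}_\C) \xrightarrow{\rho_V(g)\otimes \phi_g} V\otimes \mathbf{1}_\C.
\]
Monoidality of the action data $\gamma_{g,h}$, together with coherence of the unit constraints of the $F_g$'s, yields the equivariance hexagon $u_g\circ F_g(u_h)=u_{gh}\circ \gamma_{g,h}$, so $(\iota(V),u_g)\in \C^G$ and $\iota$ is a tensor functor.

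Next I promote $\iota(V)$ to an object of $\Z(\C^G)$ by supplying a half-braiding. For $X\in \C^G$, define
\[
\sigma_{V,X}: \iota(V)\otimes X \xrightarrow{\sim} X\otimes \iota(V)
\]
as the canonical ``flip'': since $\iota(V)$ is a direct sum of copies of $\mathbf{1}_\C$, both sides are canonically identified with $V\otimes X$. The hexagon axioms for $\sigma$, after these identifications, reduce to tautologies about the flip on vector spaces. One must then verify that each $\sigma_{V,X}$ is a morphism in $\C^G$, i.e., intertwines the equivariant structure on $X$ with the one transported across the flip; this follows from the fact that each $F_g$, being a tensor functor, respects direct sums and the unit, combined with naturality of $\phi_g$ and of the tensor constraints of $F_g$. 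The resulting $\widetilde\iota(V):=(\iota(V),\sigma_{V,-})$ assembles into a tensor functor $\widetilde\iota:\Rep(G)\to \Z(\C^G)$, and its composition with the forgetful functor to $\C^G$ is manifestly $\iota$. The induced braiding on the image of $\widetilde\iota$ becomes, under these identifications, the ordinary symmetry of vector spaces, so it is symmetric and coincides with the usual braiding of $\Rep(G)$. Fully faithfulness of $\widetilde\iota$ on $\Rep(G)$ and closure of its image under subquotients are straightforward, yielding the Tannakian subcategory $\Rep(G)\subset \Z(\C^G)$.

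For the braided case, assume $\C$ is braided with braiding $\beta$ and each $F_g$ is braided. Then for equivariant $X,Y\in \C^G$ the morphism $\beta_{X,Y}:X\otimes Y\to Y\otimes X$ is $G$-equivariant, precisely because $F_g(\beta_{X,Y})=\beta_{F_g(X),F_g(Y)}$; this endows $\C^G$ with a braided structure. The standard section $\C^G\hookrightarrow \Z(\C^G)$, $X\mapsto (X,\beta_{X,-})$, is fully faithful and braided. When evaluated on $\iota(V)$, a direct sum of copies of $\mathbf{1}_\C$, the braiding $\beta$ collapses to the flip (via the unit constraints of the braided category $\C$), so this section agrees with $\widetilde\iota$. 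Hence $\widetilde\iota$ factors through $\C^G$, embedding $\Rep(G)$ as a Tannakian subcategory of $\C^G$ itself.

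I expect the principal technical hurdle to be checking that the half-braiding $\sigma_{V,X}$ is genuinely a morphism in $\C^G$ (rather than merely in $\C$). This demands assembling the large diagram produced by the equivariant structure on $X$, the transported equivariant structure on $\iota(V)\otimes X$ and $X\otimes \iota(V)$, the tensor constraints of $F_g$, and the monoidal data $\gamma_{g,h}$; once these are written out, all cells commute by the tensor-functor axioms and the coherence of unit constraints, reducing the check to naturality of the flip on sums of copies of $\mathbf{1}_\C$. The rest of the verifications (hexagon for $\sigma$, functoriality of $\widetilde\iota$, monoidality) are routine consequences of the same coherences.
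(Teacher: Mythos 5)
Your proof is correct and is essentially the argument the paper has in mind: the paper offers no proof of Theorem \ref{help1} at all, only the citation to \cite{ENO}, Proposition 2.10 (with the remark that the non-semisimple case is parallel), and your construction --- $\iota(V)=V\otimes\mathbf{1}$ with equivariant structure $\rho_V(g)\otimes\phi_g$, the multiplicity-flip half-braiding making it central (using that the unit of a finite tensor category is simple, so $\Hom_{\C^G}(\iota(V),\iota(W))=\Hom_G(V,W)$ and the image is closed under subquotients), and the canonical fully faithful section $X\mapsto (X,\beta_{X,-})$ of $\C^G\to\Z(\C^G)$ in the braided case --- is exactly the standard proof behind that citation. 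The only point to phrase more carefully is the braided-functor axiom, which reads $F_g(\beta_{X,Y})\circ J_{X,Y}=J_{Y,X}\circ\beta_{F_g(X),F_g(Y)}$, where $J$ is the tensor structure of $F_g$, rather than the literal equality $F_g(\beta_{X,Y})=\beta_{F_g(X),F_g(Y)}$ you wrote; this is precisely what your equivariance check for $\beta_{X,Y}$ requires, so the argument is unaffected.
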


\subsection{De-equivariantization}
Let $\D$ be a finite tensor category such that the Drinfeld center
$\Z(\D)$ contains a Tannakian subcategory $\Rep(G)$, and the
composition $\Rep(G)\to \Z(\D)\to \D$ is an inclusion. Let $A:={\rm
Fun}(G)$ be the algebra of functions $G\to \Bbb C$. The group $G$
acts on $A$ by left translations, so $A$ can be considered as an
algebra in the tensor category $\Rep(G)$, and thus as an algebra in
the braided tensor category $\Z(\D)$. As such, the algebra $A$ is
braided commutative. Therefore, the category of $A$-modules in $\D$
is a tensor category, which is called the {\bf
de-equivariantization} of $\D$ and denoted by $\D_G$.

Let us now separately consider de-equivariantization of braided categories.
Namely, let $\D$ be a finite braided tensor category, and $\Rep(G)\subset
\D$ a Tannakian subcategory. In this case $\Rep(G)$ is also a Tannakian subcategory
of the Drinfeld center $\Z(\D)$ (as $\D\subset \Z(\D)$), so we can define the de-equivariantization
$\D_G$. It is easy to see that $\D_G$ inherits the braided structure from $\D$,
so it is a braided tensor category.

We will need the following result
(see \cite{ENO}, Section 2.6 and Proposition 2.10 for the semisimple
case; in the non-semisimple situation, the proof is parallel).

\begin{theorem}\label{help2}
(i) The procedures of equivariantization and de-equivariantization
are inverse to each other.

(ii) Let $\C$ be a finite tensor category with an action of a finite group $G$.
Let $\E'$ be the M\"uger centralizer of $\E=\Rep(G)$ in $\Z(\C^G)$ (i.e., the category of objects
$X\in \Z(\C^G)$ such that the squared braiding is the identity on $X\otimes Y$ for all $Y\in \E$).
Then the category $\E'_G$ is naturally equivalent to $\Z(\C)$ as a braided category.
\end{theorem}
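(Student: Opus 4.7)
For part (i), the plan is to exhibit explicit mutually inverse equivalences. The key observation is that $A = \Fun(G)$ decomposes as $\bigoplus_{g \in G}\CC \delta_g$ with pointwise multiplication, so an $A$-module structure on an object $X$ of $\C^G$ is equivalent to a $G$-grading $X = \bigoplus_g X_g$ in the underlying category $\C$; the $G$-equivariance isomorphisms $u_h : F_h(X) \simeq X$ must then permute grading components, restricting to maps $F_h(X_g) \simeq X_{hg}$ satisfying the cocycle of Section 2.2. Such data are determined by the neutral component $X_e \in \C$ with no extra structure (one recovers $X_g := F_g(X_e)$), giving a tensor equivalence $(\C^G)_G \simeq \C$, $X \mapsto X_e$. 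Conversely, given $\D$ with $\Rep(G)\subset \Z(\D)$, the right translation action of $G$ on $A$ equips $\D_G = A\text{-mod}_\D$ with a natural $G$-action, and a $G$-equivariant $A$-module in $\D$ is determined by its $A^G = \CC$-isotypic component, yielding $(\D_G)^G \simeq \D$.

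For part (ii), the strategy is to construct a braided equivalence $\Phi\colon \Z(\C)^G \simeq \E'$ and then invoke part (i) to conclude $\E'_G \simeq (\Z(\C)^G)_G \simeq \Z(\C)$ as braided tensor categories. The functor $\Phi$ is defined as follows: starting from $(Z, \sigma) \in \Z(\C)$ with $G$-equivariance data $\{u_g : F_g(Z) \simeq Z\}$ compatible with $\sigma$, assemble $Z$ into an object of $\C^G$ via the $u_g$, and extend the half-braiding $\sigma$ (originally defined against objects of $\C$) to a half-braiding against arbitrary objects of $\C^G$, using the compatibility of $\sigma$ with the $G$-action. A direct computation then shows the resulting object lies in $\E'$: objects of $\Rep(G) \subset \Z(\C^G)$ are of the form $V \otimes \be$ with $\be$ the unit of $\C$, so the squared braiding of $\Phi(Z)$ with any such object reduces to a combination of half-braidings with $\be$, which are trivial.

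The main obstacle will be essential surjectivity of $\Phi$. Fullness, faithfulness, and compatibility with tensor and braided structures are largely formal from the construction. For essential surjectivity, given $Y \in \E'$, one must reconstruct an object of $\Z(\C)^G$ mapping to $Y$: the hypothesis $Y \in \E'$ forces the half-braiding of $Y$ against the Tannakian subcategory $\Rep(G) \subset \C^G$ to be the canonical symmetric one, which allows one to descend $Y$ from $\C^G$ to a bare object of $\C$ (with the original $G$-equivariance of $Y$ providing the extra $G$-equivariant structure) and to restrict the half-braiding from all of $\C^G$ to the subcategory $\C$, yielding an object of $\Z(\C)$ equipped with $G$-equivariance. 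This descent procedure is the standard Brugui\`eres--Pareigis--M\"uger correspondence between objects centralizing an \'etale algebra and modules over it in the Drinfeld center, and its application here provides a quasi-inverse to $\Phi$ and shows that the resulting equivalence intertwines the braidings.
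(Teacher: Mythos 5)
First, a point of comparison: the paper does not actually prove Theorem \ref{help2} --- it quotes it from the literature (\cite{ENO}, Sections 2.6 and 2.11, Proposition 2.10, in the semisimple case) with the remark that the non-semisimple proof is parallel. So your proposal should be measured against that standard argument, and in outline it follows the same route: in (i), $\Fun(G)$-module objects of $\C^G$ are identified with $G$-graded objects of $\C$ with compatible equivariance and hence with their neutral components, while the converse direction amounts to the Morita triviality of $A\rtimes G\cong \End(\CC[G])$ inside $\Rep(G)\subset \Z(\D)$ (your phrase ``determined by its $A^G=\CC$-isotypic component'' is a loose rendering of exactly this); in (ii), identifying $\E'$ with $\Z(\C)^G$ and then applying (i) is how the cited sources proceed. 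Note also that to deduce $\E'_G\simeq \Z(\C)$ as braided categories from (i), your equivalence $\Z(\C)^G\simeq\E'$ must carry the canonical copy of $\Rep(G)\subset \Z(\C)^G$ onto $\E\subset\E'$; this is true for your functor $\Phi$ but needs to be stated.

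The one place where your sketch is not merely terse but misstated is the step you yourself flag as the crux, essential surjectivity. You propose to ``restrict the half-braiding from all of $\C^G$ to the subcategory $\C$,'' but $\C$ is not a subcategory of $\C^G$ (the natural tensor functor goes the other way, $\C^G\to\C$). The actual mechanism is to evaluate the half-braiding of $Y\in\E'$ on induced objects $\Ind(X)$ with underlying object $\bigoplus_g F_g(X)$; a priori this morphism has components $Z\otimes F_h(X)\to F_{h'}(X)\otimes Z$ shifting the index, and it is precisely the hypothesis that $Y$ centralizes $\Rep(G)$ (applied, e.g., to the double braiding with $A=\Fun(G)$) that kills the shifted components and leaves a genuine half-braiding of the underlying object $Z$ against $\C$. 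Equivalently, $\Z(\C^G)$ is the equivariantization of the $G$-crossed (relative) center of $\C$, and centralizing $\Rep(G)$ selects the neutral graded component, which is $\Z(\C)^G$. Without this use of the centralizing condition the descent simply does not exist, so it cannot be dismissed as formal. Finally, the Brugui\`eres--M\"uger results you invoke are proved in the semisimple (premodular/fusion) setting; in the finite tensor category setting required here one must check that the arguments use only idempotent splitting, exactness of the induction functor, and the like --- they do, and that verification is exactly what the paper's phrase ``in the non-semisimple situation, the proof is parallel'' is referring to, but an appeal to those references alone does not literally cover the case at hand.
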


\section{The quasi-Hopf algebra $A_q=A_q(\g)$}

In this section we recall the construction of the finite dimensional
basic quasi-Hopf algebras $A_q=A_q(\g)$, given in \cite{eg}, Section
4.

Let $\mathfrak{g}$ be a finite dimensional simple Lie algebra of
rank $r$, and
let $\mathfrak{b}$ be a Borel subalgebra of $\g$.

Let $n\ge 2$ be an odd integer, not divisible by $3$ if $\g=G_2$,
and let $q$ be a primitive root of $1$ of order $n^2$. We will also
assume, throughout the rest of the paper, that $n$ is relatively
prime to the determinant $\det(a_{ij})$ of the Cartan matrix of
$\g$.

Let ${\mathfrak{u}}_q(\mathfrak{b})$ be the Frobenius-Lusztig kernel
associated to ${\mathfrak b}$ (\cite{L1,L2}); it is a finite
dimensional Hopf algebra generated by grouplike elements $g_i$ and
skew-primitive elements $e_i$, $i=1,\dots,r$, such that $$
g_i^{n^2}=1,\;g_ig_j=g_jg_i,\;g_ie_jg_i^{-1}=q^{\delta_{ij}}e_j, $$
$e_i$ satisfy the quantum Serre relations, and $$\Delta(e_i)=e_i\ot
K_i+1\ot e_i,\ K_i:=\prod_j g_j^{a_{ij}}.$$ The algebra
${\mathfrak{u}}_q(\mathfrak{b})$ has a projection onto
$\mathbb{C}[(\mathbb{Z}/n^2\mathbb{Z}) ^r]$, $g_i\mapsto g_i$ and
$e_i\mapsto 0$. Let $B\subset {\mathfrak{u}}_q(\mathfrak{b})$ be the
subalgebra generated by $\{e_i\}$. Then by Radford's theorem
\cite{r}, the multiplication map
$\mathbb{C}[(\mathbb{Z}/n^2\mathbb{Z}) ^r]\ot B\to
{\mathfrak{u}}_q(\mathfrak{b})$ is an isomorphism of vector spaces.
Therefore, $A_q:=\mathbb{C}[(\mathbb{Z}/n\mathbb{Z}) ^r]B\subset
{\mathfrak{u}}_q(\mathfrak{b})$ is a subalgebra. It is generated by
$g_i^n$ and $e_i$, $1\le i\le r$.

Let $\{1_{z}|z=(z_1,\dots,z_r)\in
(\mathbb{Z}/n^2\mathbb{Z}) ^r\}$ be the set of primitive idempotents
of $\Bbb C[(\mathbb{Z}/n^2\mathbb{Z})^r]$ (i.e.,
$1_{z}g_i=q^{z_i}1_{z})$.

Following \cite{g}, for $z,y\in \Bbb Z/n^2\Bbb Z$ let $c(z,y)=q^{-z(y-y')}$, where
$y'$ denotes the remainder of division of $y$ by $n$.

Let
$${\mathbb J}:=\sum_{z,y\in (\mathbb{Z}/n^2\mathbb{Z})^r}\prod_{i,j=1}^{r}c(z_i,y_j)^{a_{ij}}
1_{z}\ot 1_{y}.
$$
It is clear that it is invertible and
$(\varepsilon\ot \id)({\mathbb J})=(\id\ot \varepsilon)({\mathbb
J})=1$. Define a new coproduct
$$
\Delta_{\mathbb J}(z)=\mathbb
J\Delta(z)\mathbb J^{-1}.
$$

\begin{lemma} The elements
$\Delta_{\mathbb J}(e_i)$ belong to $A_q\ot A_q$.
\end{lemma}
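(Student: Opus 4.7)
The plan is to compute $\Delta_{\mathbb{J}}(e_i)=\mathbb{J}\Delta(e_i)\mathbb{J}^{-1}$ directly, treating the two summands of $\Delta(e_i)=e_i\otimes K_i+1\otimes e_i$ separately. The key structural observation I will exploit is that, in terms of the primitive idempotents $1_z$ of $\mathbb{C}[(\mathbb{Z}/n^2\mathbb{Z})^r]$, the subalgebra $\mathbb{C}[(\mathbb{Z}/n\mathbb{Z})^r]\subset A_q$ generated by the $g_i^n$ consists of those linear combinations $\sum_z\lambda_z 1_z$ for which $\lambda_z$ depends only on the residue $z\bmod n$. Thus the whole proof reduces to verifying that, after simplification, the Cartan coefficients appearing in $\mathbb{J}\Delta(e_i)\mathbb{J}^{-1}$ satisfy this condition.

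For the summand involving $e_i\otimes K_i$, I use the commutation relation $1_z e_i=e_i\, 1_{z-\hat{\alpha}_i}$ (a direct consequence of $g_j e_i g_j^{-1}=q^{\delta_{ij}}e_i$, where $\hat{\alpha}_i$ denotes the $i$th standard basis vector) together with the eigenvalue formula $1_y K_i=q^{\sum_j a_{ij}y_j}1_y$. After an index shift in the $z$-summation and the diagonal cancellation between $\mathbb{J}$ and $\mathbb{J}^{-1}$ produced by the orthogonality of the $1_z\otimes 1_y$, the coefficient of $e_i 1_z\otimes 1_y$ combines $q^{\sum_j a_{ij}y_j}$ with the correction factor $\prod_l c(z_i+1,y_l)^{a_{il}}/c(z_i,y_l)^{a_{il}}$. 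From the definition $c(z,y)=q^{-z(y-y')}$ this correction equals $q^{-\sum_l a_{il}(y_l-y'_l)}$, and the two exponents telescope to $q^{\sum_l a_{il}y'_l}$, which depends only on $y_l\bmod n$. Hence $\mathbb{J}(e_i\otimes K_i)\mathbb{J}^{-1}=e_i\otimes h$ for some $h\in\mathbb{C}[(\mathbb{Z}/n\mathbb{Z})^r]\subset A_q$.

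For the summand involving $1\otimes e_i$, an analogous shift in the $y$-summation produces the correction factor $\prod_k c(z_k,y_i+1)^{a_{ki}}/c(z_k,y_i)^{a_{ki}}$. The critical observation now is that $c(z,y+1)/c(z,y)$ equals $1$ when $y\not\equiv n-1\pmod n$ and equals $q^{-zn}$ when $y\equiv n-1\pmod n$; in either case it is a power of $q^n$, hence depends only on $z\bmod n$. Splitting the $y$-sum according to whether $y_i\equiv n-1\pmod n$, the first tensor factor reduces to an element of $\mathbb{C}[(\mathbb{Z}/n\mathbb{Z})^r]$, while the second tensor factor is $e_i$ times a sum of coarse idempotents $\sum_{y\equiv\bar y\,(\bmod\,n)}1_y$, again in $\mathbb{C}[(\mathbb{Z}/n\mathbb{Z})^r]$; both lie in $A_q$.

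The only substantive content beyond bookkeeping is the recognition that $c(z,y)$ is engineered precisely so that its discrete differences in each argument are always integer powers of $q^n$. This is the mechanism forcing the twisted coproduct of a skew-primitive element to stay inside $A_q$, and is the conceptual crux of the lemma.
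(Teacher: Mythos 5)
Your computation is correct: the index shifts $1_z e_i = e_i 1_{z-\hat\alpha_i}$, the cancellation of all factors of $\mathbb{J}$ against $\mathbb{J}^{-1}$ except those in the shifted coordinate, and the observation that the discrete differences of $c$ in each argument ($q^{-(y-y')}$ resp.\ $1$ or $q^{-nz}$) are powers of $q^n$, hence give coefficients constant on cosets mod $n$, is exactly what makes $\Delta_{\mathbb{J}}(e_i)$ land in $A_q\otimes A_q$. The paper states this lemma without proof (it is recalled from \cite{eg}), and your direct verification is essentially the same computation carried out there, so it correctly fills in the omitted details.
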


\begin{lemma}
The associator $\Phi:=d{\mathbb J}$ obtained by twisting the trivial
associator by ${\mathbb J}$ is given by the formula
$$
\Phi=\sum_{\beta,\gamma,\delta\in (\mathbb{Z}/n\mathbb{Z})^r}\biggl(\prod_{i,j=1}^r
q^{a_{ij}\beta_i((\gamma_j+\delta_j)'-\gamma_j-\delta_j)}\biggr)
\mathbf 1_\beta\otimes \mathbf 1_\gamma\otimes \mathbf 1_\delta,
$$
where
$\mathbf 1_\beta$ are the primitive idempotents
of $\Bbb C[(\Bbb Z/n\Bbb Z)^r]$, $\mathbf 1_\beta g_i^n=q^{n\beta_i}\mathbf 1_\beta$, and
 we regard the components of $\beta,\gamma,\delta$ as
elements of $\mathbb Z$.\footnote{$\mathbf 1_\beta$ should not be confused with $1_z$ that appeared above.}
Thus $\Phi$ belongs to $A_q\otimes
A_q\otimes A_q$.
\end{lemma}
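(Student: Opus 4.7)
The plan is to compute $\Phi=d\mathbb{J}$ directly from the coboundary formula
$$
\Phi \;=\; (1\ot \mathbb J)\,(\id\ot \Delta)(\mathbb J)\,(\Delta\ot \id)(\mathbb J^{-1})\,(\mathbb J^{-1}\ot 1),
$$
which expresses the associator obtained from twisting the trivial one by $\mathbb{J}$. Everything lives in the commutative algebra $\Bbb C[(\Bbb Z/n^2\Bbb Z)^r]^{\ot 3}$, so order is immaterial. Using $\Delta(1_z)=\sum_{a+b=z}1_a\ot 1_b$, each of the four factors is diagonal in the basis of primitive idempotents $1_\alpha\ot 1_\gamma\ot 1_\delta$, and by idempotent orthogonality the coefficient of $1_\alpha\ot 1_\gamma\ot 1_\delta$ in $\Phi$ is simply the product
$$
\prod_{i,j}\frac{c(\gamma_i,\delta_j)^{a_{ij}}\,c(\alpha_i,\gamma_j+\delta_j)^{a_{ij}}}{c(\alpha_i+\gamma_i,\delta_j)^{a_{ij}}\,c(\alpha_i,\gamma_j)^{a_{ij}}}.
$$

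Next I would exploit the fact that $c(z,y)=q^{-z(y-y')}$ is additive in the first slot, $c(z_1+z_2,y)=c(z_1,y)c(z_2,y)$, to cancel $c(\gamma_i,\delta_j)$ against the corresponding piece of $c(\alpha_i+\gamma_i,\delta_j)$. What remains is the purely $\alpha$-dependent factor $\prod_{i,j}c(\alpha_i,\gamma_j+\delta_j)^{a_{ij}}c(\alpha_i,\gamma_j)^{-a_{ij}}c(\alpha_i,\delta_j)^{-a_{ij}}$. Substituting $c(z,y)=q^{-z(y-y')}$ and collecting exponents, the $y$-parts of the three brackets cancel and the $y'$-parts recombine, yielding
$$
\prod_{i,j}q^{\,a_{ij}\alpha_i\bigl((\gamma_j+\delta_j)'-\gamma_j'-\delta_j'\bigr)}.
$$

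Finally, to descend to $A_q^{\ot 3}$, observe that $(\gamma_j+\delta_j)'-\gamma_j'-\delta_j'\in\{0,-n\}$, so each exponent is an integer multiple of $n$ times $\alpha_i$. Since $q$ has order $n^2$, $q^{n\alpha_i(\cdot)}$ depends only on $\alpha_i$ modulo $n$, and the bracket depends only on $\gamma_j$, $\delta_j$ modulo $n$. Grouping the $1_z$ in $\Bbb C[(\Bbb Z/n^2\Bbb Z)^r]$ whose reduction mod $n$ is $\beta$ into $\mathbf 1_\beta=\sum_{z\equiv\beta}1_z\in\Bbb C[(\Bbb Z/n\Bbb Z)^r]\subset A_q$, and doing the same for $\gamma$ and $\delta$ (whose representatives in $\{0,\dots,n-1\}$ satisfy $\gamma'=\gamma$, $\delta'=\delta$), the triple sum over $(\Bbb Z/n^2\Bbb Z)^{3r}$ collapses to the asserted sum over $(\Bbb Z/n\Bbb Z)^{3r}$, simultaneously exhibiting the claimed formula and the containment $\Phi\in A_q^{\ot 3}$.

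The main obstacle is purely bookkeeping: keeping the four $c$-factors, their inverses, and the residues-versus-integers conventions straight through the exponent cancellation. Conceptually the statement is simply that the $3$-cocycle $d\mathbb{J}$ on $(\Bbb Z/n^2\Bbb Z)^r$ is the inflation of a cocycle on the quotient $(\Bbb Z/n\Bbb Z)^r$, which in turn reflects the fact that the $2$-cochain $c$ already takes values in the group of $n$-th (not $n^2$-th) roots of unity.
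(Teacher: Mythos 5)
Your computation is correct and is exactly the direct coboundary calculation that this statement rests on: expand the four factors of $d\mathbb{J}$ in the idempotent basis, use additivity of $c(\cdot,y)$ in the first argument, note that the surviving exponent $\alpha_i\bigl((\gamma_j+\delta_j)'-\gamma_j'-\delta_j'\bigr)$ is a multiple of $n$, and regroup the $1_z$ into the idempotents $\mathbf 1_\beta$ of $\Bbb C[(\Bbb Z/n\Bbb Z)^r]\subset A_q$. The paper itself gives no proof here (the lemma is quoted from \cite{eg}, Section 4), and your argument is the same verification carried out there, so there is nothing to add beyond noting that the stated formula presupposes the representatives of $\gamma,\delta$ chosen in $\{0,\dots,n-1\}$, as you correctly do.
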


\begin{theorem} The algebra $A_q$ is a quasi-Hopf subalgebra
of ${\mathfrak{u}}_q(\mathfrak{b})^{\mathbb J}$, which has coproduct $\Delta_{\mathbb J}$ and
associator $\Phi$. It is of dimension $n^{{\rm dim}{\mathfrak g}}$.
\end{theorem}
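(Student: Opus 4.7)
First I would check that $A_q$ is a sub-quasi-bialgebra of $\mathfrak{u}_q(\mathfrak b)^{\mathbb J}$. Closure of $\Delta_{\mathbb J}$ on the generators $e_i$ is Lemma 3.1. For the Cartan generators $g_i^n$, the twist $\mathbb J$ lies in the Cartan tensor square, hence commutes with $g_i \otimes g_i$, so $\Delta_{\mathbb J}(g_i^n) = g_i^n \otimes g_i^n \in A_q \otimes A_q$. The associator $\Phi$ lies in $A_q^{\otimes 3}$ by Lemma 3.2, and the counit is unchanged by the twist since $(\varepsilon \otimes \id)(\mathbb J) = (\id \otimes \varepsilon)(\mathbb J) = 1$. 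This gives the sub-quasi-bialgebra structure on $A_q$.

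The subtle step is producing the antipode on $A_q$. The antipode of the ambient Hopf algebra $\mathfrak{u}_q(\mathfrak b)$ sends $e_i$ to $-e_i K_i^{-1}$, and $K_i^{-1} = \prod_j g_j^{-a_{ij}}$ typically fails to lie in $A_q$ because the exponents $-a_{ij}$ need not be divisible by $n$. However, in a quasi-Hopf algebra the datum $(S, \alpha, \beta)$ is determined only up to the gauge transformation $(S, \alpha, \beta) \mapsto (U S(\cdot) U^{-1}, U \alpha, \beta U^{-1})$ for any invertible $U$. My plan is to exhibit $U \in \mathfrak{u}_q(\mathfrak b)^{\mathbb J}$ so that the transformed antipode preserves $A_q$ on generators and the corresponding $\alpha, \beta$ (coming from the standard twist formulas for a Hopf algebra twisted by $\mathbb J$, then gauged by $U$) lie in $A_q$. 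By anti-multiplicativity of $S$, the quasi-Hopf axioms $\sum S(h\I) \alpha h\II = \varepsilon(h) \alpha$ and $\sum h\I \beta S(h\II) = \varepsilon(h) \beta$, together with the normalization involving $\Phi$, then reduce to a finite check on the generators $g_i^n$ and $e_i$.

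Finally, for the dimension, Radford's theorem provides a linear isomorphism $\mathbb C[(\mathbb Z/n^2 \mathbb Z)^r] \otimes B \to \mathfrak{u}_q(\mathfrak b)$; restricting it to $\mathbb C[(\mathbb Z/n \mathbb Z)^r] \otimes B$ yields a linear isomorphism onto $A_q$. From $\dim \mathfrak{u}_q(\mathfrak b) = n^{2 \dim \b}$ and $\dim \mathbb C[(\mathbb Z/n^2 \mathbb Z)^r] = n^{2r}$ one reads off $\dim B = n^{2 \dim \n}$, whence $\dim A_q = n^r \cdot n^{2 \dim \n} = n^{\dim \g}$. The principal obstacle is the antipode step: everything else is immediate from Lemmas 3.1--3.2 or a direct dimension count, but identifying a gauge representative that restricts to $A_q$ (equivalently, constructing the antipode of $A_q$ intrinsically) is the one place where the quasi-Hopf---rather than Hopf---nature of $A_q$ is forced.
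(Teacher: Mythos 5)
Your quasi-bialgebra part and your dimension count are fine, and they coincide with the intended argument: closure of $\Delta_{\mathbb J}$ on generators (Lemma 3.1 for the $e_i$, and $\Delta_{\mathbb J}(g_i^n)=g_i^n\otimes g_i^n$ because $\mathbb J$ lies in the Cartan part), $\Phi\in A_q^{\otimes 3}$ by Lemma 3.2, the counit conditions, and $\dim A_q=n^r\cdot n^{2\dim\mathfrak n}=n^{\dim\mathfrak g}$ via Radford's decomposition. Bear in mind that the present paper does not prove this theorem at all: it is recalled from \cite{eg}, Section 4 (building on \cite{g}), and it is precisely there that the antipode data are actually produced.

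That antipode step is where your proposal has a genuine gap: you correctly identify it as the crux (since $S(e_i)=-e_iK_i^{-1}$ and $K_i^{-1}=\prod_jg_j^{-a_{ij}}\notin A_q$), but you only announce a plan to ``exhibit $U$'' and never exhibit it or verify anything about it. Two points. First, your closing reduction is misstated: the maps $h\mapsto\sum S(h_{(1)})\alpha h_{(2)}$ and $h\mapsto\sum h_{(1)}\beta S(h_{(2)})$ are not multiplicative, so the quasi-antipode axioms cannot be ``checked on generators''; on the other hand, no such check is needed, because the standard twist formulas give a valid quasi-antipode $(S,\alpha_{\mathbb J},\beta_{\mathbb J})$ on all of $\mathfrak u_q(\mathfrak b)^{\mathbb J}$ and a gauge transformation $(S,\alpha,\beta)\mapsto(US(\cdot)U^{-1},U\alpha,\beta U^{-1})$ preserves these axioms identically. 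So the entire content of the antipode step is the containment statements: that $US(\cdot)U^{-1}$ sends the generators $e_i$, $g_i^n$ into $A_q$ (which does suffice, since it is an algebra antihomomorphism) and that $U\alpha_{\mathbb J}$, $\beta_{\mathbb J}U^{-1}\in A_q$. Second, establishing these containments is a real computation, not a formality: one must take $U=\sum_z q^{Q(z)}1_z\in\mathbb C[T]$ with $Q$ a quadratic (Gaussian-type) form built from the Cartan matrix, which is where the standing hypotheses on $n$ (oddness, so that $2$ is invertible modulo $n^2$, and coprimality conditions) enter; one then has to check that the resulting elements of $\mathbb C[T]$ — e.g.\ the Cartan factor in $US(e_i)U^{-1}$, and the gauged $\alpha_{\mathbb J},\beta_{\mathbb J}$, whose idempotent coefficients involve the quantities $c(z_i,\pm z_j)^{a_{ij}}$ — have coefficients depending only on $z$ modulo $n$, i.e.\ lie in the span of the $g_j^{n}$. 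None of this is in your write-up, and without it the theorem's assertion that $A_q$ is a quasi-Hopf (and not merely quasi-bi-) subalgebra is not established; this is exactly the verification supplied by the explicit antipode formulas in \cite{g} and \cite{eg}.
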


\begin{remark} The quasi-Hopf algebra $A_q$ is not twist equivalent to
a Hopf algebra. Indeed, the associator $\Phi$ is non-trivial since
the $3-$cocycle corresponding to $\Phi$ restricts to a non-trivial
$3-$cocycle on the cyclic group $\mathbb{Z}/n\mathbb{Z}$ consisting
of all tuples whose coordinates equal $0$, except for the $i$th
coordinate. Since $A_q$ projects onto
$(\mathbb{C}[(\mathbb{Z}/n\mathbb{Z})^r],\Phi)$ with non-trivial
$\Phi$, $A_q$ is not twist equivalent to a Hopf algebra.
\end{remark}

\section{Main results}

Let $T:=(\mathbb{Z}/n^2\mathbb{Z})^r$. We have the following well
known result.

\begin{theorem}\label{quandu} The quantum double $D({\mathfrak{u}}_q(\mathfrak{b}))$ of ${\mathfrak{u}}_q(\mathfrak{b})$ is twist
equivalent, as a quasitriangular Hopf algebra,
to ${\mathfrak{u}}_q(\mathfrak{g})\ot \mathbb{C}[T]$.
Therefore,
$$
\mathcal{Z}(\Rep({\mathfrak{u}}_q(\mathfrak{b})))=\Rep({\mathfrak{u}}_q(\mathfrak{g}))\boxtimes \Vect_{T}
$$
as a braided tensor category, where the braiding on $\Rep({\mathfrak{u}}_q(\mathfrak{g}))$
is the standard one, and $\Vect_{T}$ is the category of
$T$-graded vector spaces with the braiding coming from
the quadratic form on $T$ defined by the Cartan matrix of $\g$.
\footnote{Actually, the quadratic form gives the inverse braiding,
but this is not important for our considerations.}
\end{theorem}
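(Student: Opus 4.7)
The plan is to reduce to Drinfeld's classical quantum-double presentation of $U_q(\g)$ and transport it to the small/root of unity setting. The first step is to recall that $D({\mathfrak u}_q(\b))$ is, by construction, built from the Hopf pairing between ${\mathfrak u}_q(\b)={\mathfrak u}_q(\b^+)$ and (the coopposite of) its Hopf dual, which under Lusztig's presentation is identified with ${\mathfrak u}_q(\b^-)$. Thus as a coalgebra it is ${\mathfrak u}_q(\b^+)\otimes {\mathfrak u}_q(\b^-)$, and it is generated by the standard $e_i,f_i$ together with two independent copies $\{g_i^+\}$ and $\{g_i^-\}$ of the Cartan group $T=(\mathbb Z/n^2\mathbb Z)^r$, with the cross commutation relations coming from the Hopf pairing encoded by the Cartan matrix.

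Next I would perform the change of variables that classically identifies $D(U_q(\b))$ with $U_q(\g)\otimes U_q(\h)$. Namely, set $K_i:=g_i^+$ and look for a complementary subgroup $T'\subset T\times T$ whose generators commute with every $e_j$ and $f_j$. Such a $T'$ is given by elements of the form $g_i^+\cdot(g_i^-)^{-\sigma(i)}$, where $\sigma$ is described by the inverse Cartan matrix; the hypothesis that $n$ is coprime to $\det(a_{ij})$ is exactly what makes this system invertible over $\mathbb Z/n^2\mathbb Z$, so that $T\times T=T_{\mathrm{diag}}\times T'$. Under this decomposition the subalgebra generated by $e_i,f_i,K_i$ is visibly a homomorphic image of ${\mathfrak u}_q(\g)$ (by checking the defining relations), while the central group algebra $\mathbb C[T']\cong\mathbb C[T]$ commutes with it, and a dimension count (using the PBW basis of ${\mathfrak u}_q(\b^\pm)$) shows $D({\mathfrak u}_q(\b))\cong {\mathfrak u}_q(\g)\otimes \mathbb C[T]$ as an algebra.

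The coproduct does not, however, respect this tensor decomposition on the nose, because $\Delta(K_i)=K_i\otimes K_i$ while in the new basis $K_i$ gets replaced by a product of a ${\mathfrak u}_q(\g)$-Cartan element and a $T'$-element. Standard reasoning (as in Reshetikhin's double-twist and Radford's analysis) shows that there is a twist $J\in \mathbb C[T\times T]$, supported entirely on the Cartan part and built from the Cartan matrix bilinear form, such that $D({\mathfrak u}_q(\b))^J\cong {\mathfrak u}_q(\g)\otimes \mathbb C[T]$ as Hopf algebras. Twisting the canonical quasitriangular structure $R_{D}$ of $D({\mathfrak u}_q(\b))$ by $J$ factors it as $R_{{\mathfrak u}_q(\g)}\cdot R_T$, where $R_{{\mathfrak u}_q(\g)}$ is the usual $R$-matrix of the small quantum group (the one produced by Drinfeld's construction), and $R_T\in \mathbb C[T]\otimes \mathbb C[T]$ is the $R$-matrix associated with the quadratic form on $T$ given by the Cartan matrix. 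I expect the delicate step to be precisely this last verification: checking that after the Cartan twist the residual braiding on the $\mathbb C[T]$ factor is exactly (the inverse of) the Cartan-matrix quadratic form, and not some shifted or rescaled version of it.

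Finally, the categorical statement is automatic: $\mathcal Z(\Rep({\mathfrak u}_q(\b)))=\Rep(D({\mathfrak u}_q(\b)))$ as a braided tensor category, and a twist equivalence of quasitriangular Hopf algebras induces a braided equivalence of representation categories. Combining with the decomposition above gives
$$\mathcal Z(\Rep({\mathfrak u}_q(\b)))\simeq \Rep({\mathfrak u}_q(\g))\boxtimes \Rep(\mathbb C[T])=\Rep({\mathfrak u}_q(\g))\boxtimes \Vect_{T},$$
with the braiding on the second factor determined by $R_T$, i.e.\ by the Cartan-matrix quadratic form on $T$, as stated.
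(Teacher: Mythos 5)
Your proposal follows essentially the same route as the paper: identify $D({\mathfrak u}_q(\mathfrak b))$ with ${\mathfrak u}_q(\mathfrak g)\otimes\mathbb C[T]$ as an algebra with a coproduct mixed on the Cartan part (the paper simply cites \cite{D,CP} for this, with $\Delta_*(e_i)=e_i\otimes K_iK_i'+1\otimes e_i$, $\Delta_*(f_i)=f_i\otimes K_i'^{-1}+K_i^{-1}\otimes f_i$), then untangle it by a twist $J\in\mathbb C[T\times T]^{\otimes 2}$ coming from the Cartan-matrix bicharacter (the paper takes $\langle(a,b),(c,d)\rangle=\langle a,d\rangle$), check the $R$-matrix factors accordingly, and pass to representation categories. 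The only difference is one of emphasis: you re-derive the Cartan splitting (where, as you note, the coprimality of $n$ with $\det(a_{ij})$ enters, and where the $f_i$ must implicitly be corrected by central grouplikes), while leaving the twist and $R$-matrix computation asserted at the same level of detail as the paper does.
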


\begin{proof}
It is well known (\cite{D}, \cite{CP}) that
$D({\mathfrak{u}}_q(\mathfrak{b}))$ is isomorphic
as a Hopf algebra to $H:={\mathfrak{u}}_q(\mathfrak{g})\ot \mathbb{C}[T]$,
with standard generators $e_i, f_i, K_i\in
{\mathfrak{u}}_q(\mathfrak{g})$ and
$K_i'\in \mathbb C[T]$,
and comultiplication
$$
\Delta_*(e_i)=e_i\otimes K_iK_i'+1\otimes e_i,\
\Delta_*(f_i)=f_i\otimes K_i'^{-1}+K_i^{-1}\otimes f_i
$$
(in fact, this is not hard to check by a direct computation).
Note that the group algebra $\Bbb C[T\times T]$
is contained in $H$ as a Hopf subalgebra
(with the two copies of $T$
generated by $K_i$ and $K_i'$, respectively).
Consider the bicharacter
of $T\times T$ given by the formula
$$
\langle(a,b),(c,d)\rangle=<a,d>,
$$
where $<,>: T\times T\to \Bbb C^*$ is the pairing given by the
Cartan matrix. Consider the twist $J\in \Bbb C[T\times T]^{\otimes
2}$ corresponding to this bicharacter. It is easy to compute
directly that twisting by $J$ transforms the above comultiplication
$\Delta_*$ to the usual ``tensor product'' compultiplication of $H$:
$$
\Delta(e_i)=e_i\otimes K_i+1\otimes e_i,\
\Delta(f_i)=f_i\otimes 1+K_i^{-1}\otimes f_i,
$$
and the same holds for the universal R-matrix
(this computation uses that $K_i'$ are central elements).
This implies the theorem.
\end{proof}

Let $\Gamma\cong (\mathbb{Z}/n\mathbb{Z})^r$
be the $n-$torsion subgroup of $T$.

Our first main result is the following.

\begin{theorem}\label{main1}
The group $\Gamma$ acts on the category $\C=\Rep(A_q)$, and the
equivariantization $\C^\Gamma$ is tensor equivalent to $\Rep({\mathfrak{u}}_q(\mathfrak{b}))$.
\end{theorem}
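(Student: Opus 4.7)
The plan is to realize $\Rep(A_q)$ as a de-equivariantization of $\Rep({\mathfrak u}_q({\mathfrak b}))$ and then appeal to Theorem \ref{help2}(i). I would first exhibit a Tannakian subcategory of $\Z(\Rep({\mathfrak u}_q({\mathfrak b})))$ equivalent to $\Rep(\Gamma)$ whose composition with the forgetful functor to $\Rep({\mathfrak u}_q({\mathfrak b}))$ is fully faithful, placing us in the setup of Section 2.3. The de-equivariantization $\Rep({\mathfrak u}_q({\mathfrak b}))_{\Gamma}$ then carries a canonical $\Gamma$-action whose equivariantization recovers $\Rep({\mathfrak u}_q({\mathfrak b}))$ by Theorem \ref{help2}(i); hence it suffices to identify $\Rep({\mathfrak u}_q({\mathfrak b}))_{\Gamma}$ with $\Rep(A_q)$.

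To find the Tannakian subcategory, I would use Theorem \ref{quandu}: $\Z(\Rep({\mathfrak u}_q({\mathfrak b}))) \simeq \Rep({\mathfrak u}_q({\mathfrak g})) \boxtimes \Vect_T$ as braided tensor categories. Inside $\Vect_T$, the full subcategory $\Vect_\Gamma$ supported on the $n$-torsion subgroup $\Gamma \subset T$ is symmetric: the braiding on $\Vect_T$ is the bicharacter $(\lambda,\mu) \mapsto \prod_{i,j} q^{a_{ij}\lambda_i\mu_j}$, and for $\lambda,\mu \in \Gamma$ each coordinate is a multiple of $n$, so the exponent is divisible by $n^2$ and the braiding is trivial. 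Since $\gcd(n, \det(a_{ij})) = 1$, the Cartan form induces a non-degenerate pairing on $\Gamma$, so $\Vect_\Gamma \simeq \Rep(\Gamma)$ as symmetric tensor categories. The composition $\Vect_\Gamma \hookrightarrow \Z(\Rep({\mathfrak u}_q({\mathfrak b}))) \to \Rep({\mathfrak u}_q({\mathfrak b}))$ sends the simple at degree $\gamma$ to the one-dimensional representation on which $g_i$ acts by $q^{\gamma_i}$; these are pairwise non-isomorphic, so this composition is fully faithful.

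The crux of the proof is the identification $\Rep({\mathfrak u}_q({\mathfrak b}))_{\Gamma} \simeq \Rep(A_q)$. By definition, the left-hand side is $A\text{-mod}(\Rep({\mathfrak u}_q({\mathfrak b})))$ where $A = \Fun(\Gamma)$. An $A$-module $V$ decomposes via the primitive idempotents $1_\gamma \in A$ into components $V = \bigoplus V_\gamma$, and the tensor product $\otimes_A$ in $\Rep({\mathfrak u}_q({\mathfrak b}))_{\Gamma}$ uses the half-braiding of $A \in \Vect_\Gamma \subset \Z(\Rep({\mathfrak u}_q({\mathfrak b})))$ with objects of $\Rep({\mathfrak u}_q({\mathfrak b}))$, which is nontrivial because the Cartan bicharacter governs how $\Vect_T$-objects braid. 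I would exhibit the desired equivalence by sending $V$ to $V_0$, equipped with the natural $A_q$-action obtained by restricting the ${\mathfrak u}_q({\mathfrak b})$-action along the subalgebra $A_q \subset {\mathfrak u}_q({\mathfrak b})$. Dimensions are consistent: $\dim V_0 = \dim V / |\Gamma|$ matches $\FPdim\,\Rep(A_q) = n^{\dim \mathfrak{g}}$. My expectation is that the coherence datum arising from composing $\otimes_A$'s produces precisely the associator $\Phi = d{\mathbb J}$ of Section 3, with the scalars $c(z,y) = q^{-z(y-y')}$ in ${\mathbb J}$ recording the half-braiding coefficients.

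The main obstacle I anticipate is this last identification: carefully matching the abstract quasi-Hopf structure produced by the half-braiding-dependent $\otimes_A$ with the concrete twist ${\mathbb J}$ and associator $\Phi$ defined by hand in Section 3, including the verification that the remark's non-triviality of the $3$-cocycle is compatible with this calculation. Once this is verified, the theorem follows from Theorem \ref{help2}(i): $\Rep(A_q)^\Gamma \simeq (\Rep({\mathfrak u}_q({\mathfrak b}))_\Gamma)^\Gamma \simeq \Rep({\mathfrak u}_q({\mathfrak b}))$.
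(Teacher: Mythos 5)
Your overall strategy (realize $\Rep(A_q)$ as the de-equivariantization $\Rep({\mathfrak u}_q({\mathfrak b}))_\Gamma$ and then invoke Theorem \ref{help2}(i)) is coherent, and your subsidiary observations are sound: the subcategory of $\Vect_T$ supported on the $n$-torsion $\Gamma$ is indeed Tannakian because the Cartan quadratic form takes values $q^{(\text{multiple of }n^2)}=1$ there, and its image in $\Rep({\mathfrak u}_q({\mathfrak b}))$ consists of pairwise non-isomorphic one-dimensional characters, so the hypotheses of Section 2.3 hold. However, the step you yourself flag as "the main obstacle" --- the identification of $\Rep({\mathfrak u}_q({\mathfrak b}))_\Gamma$ with $\Rep(A_q)$ \emph{as a tensor category}, i.e.\ the verification that the $\otimes_A$-structure induced by the half-braiding reproduces exactly the twisted coproduct $\Delta_{\mathbb J}$ and the associator $\Phi=d{\mathbb J}$ of Section 3 --- is not an incidental verification: by Theorem \ref{help2}(i) it is \emph{equivalent} to the statement being proved. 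As written, your argument reduces Theorem \ref{main1} to a claim of the same difficulty and stops there; "my expectation is that the coherence datum \ldots produces precisely $\Phi$" is precisely what has to be computed, and nothing in your outline (the functor $V\mapsto V_0$, the dimension count, the heuristic about $c(z,y)$ recording half-braiding coefficients) carries out or even sets up that computation. Note also that checking $V\mapsto V_0$ is well defined already requires care: the idempotents $1_\gamma\in A=\Fun(\Gamma)$ are not ${\mathfrak u}_q({\mathfrak b})$-eigenvectors, so the compatibility of the component $V_0$ with the $A_q$-action (in the quasi-Hopf sense, with the nontrivial associator) must be argued, not assumed.

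The paper proceeds in the opposite direction, which is what makes the hard part finite and concrete: it defines the $\Gamma$-action on $\Rep(A_q)$ explicitly, with $F_{ij}$ given by conjugation of the $A_q$-action by $g_i^j$ and the tensor structure $\gamma_{ij_1,ij_2}$ given by the action of $(g_i^n)^{((j_1+j_2)'-j_1-j_2)/n}\in A_q$, and then observes that a $\Gamma$-equivariant object is literally a module over the algebra generated by $A_q$ and elements $p_{i,j}=g_i^j$ subject to exactly the defining relations of ${\mathfrak u}_q({\mathfrak b})$, with the tensor product matching that of ${\mathfrak u}_q({\mathfrak b})^{\mathbb J}$. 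In other words, the generators-and-relations presentation of ${\mathfrak u}_q({\mathfrak b})$ over $A_q$ does the work that in your approach is hidden in the unverified matching of the half-braiding-induced monoidal structure with ${\mathbb J}$ and $\Phi$. If you want to salvage your route, you would have to carry out that matching explicitly (essentially re-deriving the formulas of Section 3 from the braiding of $\Vect_T$), at which point you will have done at least as much computation as the paper's direct argument; alternatively, adopt the paper's direction and use \ref{help2}(i) afterwards, as the paper itself does when deducing Theorem \ref{theorem1}.
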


The proof of Theorem \ref{main1} will be given in the next section.

By Theorem \ref{help2}(i), Theorem \ref{main1}
implies that the category $\Rep(A_q)$ can be conceptually defined
as the de-equivariantization of $\Rep({\mathfrak{u}}_q(\mathfrak{b}))$.

Our second main result is the following.

\begin{theorem}\label{theorem1}
The Drinfeld center $\mathcal{Z}(\Rep(A_q))$ of $\Rep(A_q)$ is braided
equivalent to $\Rep({\mathfrak{u}}_q(\mathfrak{g}))$. Equivalently, the quantum
double $D(A_q)$ of the quasi-Hopf algebra $A_q$ is twist equivalent
(as a quasitriangular quasi-Hopf algebra)
to the small quantum group ${\mathfrak{u}}_q(\mathfrak {g})$.
\end{theorem}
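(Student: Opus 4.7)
The plan is to derive Theorem \ref{theorem1} from the three preceding results by computing, and then de-equivariantizing, the M\"uger centralizer of $\Rep(\Gamma)$ inside $\Z(\Rep(\mathfrak{u}_q(\b)))$. Setting $\C = \Rep(A_q)$, Theorem \ref{main1} identifies $\C^\Gamma$ with $\Rep(\mathfrak{u}_q(\b))$, so Theorem \ref{help2}(ii) applies and gives $\Z(\C) \simeq \E'_\Gamma$, where $\E' \subset \Z(\Rep(\mathfrak{u}_q(\b)))$ is the M\"uger centralizer of $\E := \Rep(\Gamma)$. Theorem \ref{quandu} then identifies the ambient center, as a braided tensor category, with the Deligne product $\Rep(\mathfrak{u}_q(\g)) \boxtimes \Vect_T$.

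The key step is to locate $\Rep(\Gamma)$ inside this Deligne product and to show that it is a Lagrangian subcategory of the $\Vect_T$ factor. Since $\Rep(\Gamma)$ is pointed and $\mathfrak{u}_q(\g)$ has no non-trivial one-dimensional representations (for $n$ odd, any such representation would force $e_i = f_i = 0$ and $K_i^2 = 1$, hence $K_i = 1$), all simple objects of $\Rep(\Gamma)$ are of the form $\mathbf 1 \boxtimes V_t$, so $\Rep(\Gamma)$ lies entirely in the $\Vect_T$ factor. Tracking the natural embedding of grouplikes under the twist of Theorem \ref{quandu}, one identifies $\Rep(\Gamma)$ with $\Vect_H$ for the subgroup $H := nT \subset T$ of order $n^r = |\Gamma|$. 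With respect to the bilinear form on $T$ defined by the Cartan matrix, $H$ is isotropic (since $q$ has order $n^2$), and the assumption that $n$ is coprime to $\det(a_{ij})$ is exactly what forces $H^\perp \subseteq H$, so that $H$ is Lagrangian. Consequently the M\"uger centralizer of $\Vect_H$ inside $\Vect_T$ is $\Vect_H$ itself, and therefore $\E' = \Rep(\mathfrak{u}_q(\g)) \boxtimes \Vect_{nT} = \Rep(\mathfrak{u}_q(\g)) \boxtimes \Rep(\Gamma)$.

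To conclude, the regular algebra $\Fun(\Gamma)$ used in the de-equivariantization lies entirely in the second Deligne factor, so the de-equivariantization splits:
$$
\E'_\Gamma \simeq \Rep(\mathfrak{u}_q(\g)) \boxtimes \Rep(\Gamma)_\Gamma \simeq \Rep(\mathfrak{u}_q(\g)) \boxtimes \Vect \simeq \Rep(\mathfrak{u}_q(\g)),
$$
with the braiding transported from $\E'$ being the standard one on $\Rep(\mathfrak{u}_q(\g))$. This proves the first assertion of the theorem, and Tannakian reconstruction applied to the forgetful functor $\Z(\Rep(A_q)) \to \Rep(A_q)$ then yields the quasi-Hopf statement that $D(A_q)$ is twist equivalent to $\mathfrak{u}_q(\g)$ as a quasitriangular quasi-Hopf algebra. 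Dimensions match: $\dim D(A_q) = n^{2\dim \g} = \dim \mathfrak{u}_q(\g)$.

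The main obstacle I anticipate is the clean identification of $\Rep(\Gamma)$ with the specific Lagrangian subcategory $\Vect_{nT}$. One has to track carefully how the Tannakian embedding produced by Theorem \ref{help1} is transported under the bicharacter twist used to prove Theorem \ref{quandu}, and the arithmetic hypothesis on $n$ enters essentially at exactly this point: without the coprimality of $n$ and $\det(a_{ij})$ the subgroup $nT$ would fail to be Lagrangian, $\E'$ would be strictly larger than $\Rep(\mathfrak{u}_q(\g)) \boxtimes \Rep(\Gamma)$, and the de-equivariantization would fail to collapse to $\Rep(\mathfrak{u}_q(\g))$.
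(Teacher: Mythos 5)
Your proposal is correct and follows essentially the same route as the paper: invoke Theorem \ref{main1} together with Theorem \ref{help2}(ii), identify $\Rep(\Gamma)$ as a Lagrangian subcategory of the $\Vect_T$ factor of $\Z(\Rep(\mathfrak{u}_q(\mathfrak{b})))=\Rep(\mathfrak{u}_q(\mathfrak{g}))\boxtimes\Vect_T$, compute the M\"uger centralizer as $\Rep(\mathfrak{u}_q(\mathfrak{g}))\boxtimes\Rep(\Gamma)$, and de-equivariantize. You merely spell out details the paper leaves implicit (that $\Rep(\Gamma)$ lands in the $\Vect_T$ factor, and that the coprimality of $n$ with $\det(a_{ij})$ makes the subgroup $nT$ Lagrangian), which is a welcome but not essentially different elaboration.
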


\begin{proof}
Since
$\mathcal{Z}(\Rep({\mathfrak{u}}_q(\mathfrak{b})))=\Rep({\mathfrak{u}}_q(\mathfrak{g}))\boxtimes
\Vect_{T}$ as a braided category, and $\Rep \Gamma\subset \Vect_T$
is a Tannakian subcategory, we have that $\Rep(\Gamma)\subset
\mathcal{Z}(\Rep({\mathfrak{u}}_q(\mathfrak{b})))$ is a Tannakian
subcategory. Moreover, $\Rep \Gamma\subset \Vect_T$ is a Lagrangian
subcategory (i.e, it coincides with its M\"uger centralizer in
$\Vect_T$), so the M\"uger centralizer $\D$ of $\Rep \Gamma$ in
$\mathcal{Z}(\Rep({\mathfrak{u}}_q(\mathfrak{b})))$ is equal to
$\Rep({\mathfrak{u}}_q(\mathfrak{g}))\boxtimes \Rep(\Gamma)$. This
implies that the de-equivariantization $\D_\Gamma$ is
$\Rep({\mathfrak{u}}_q(\mathfrak{g}))$. On the other hand, by
Theorem \ref{main1},
$\Rep({\mathfrak{u}}_q(\mathfrak{b}))=\Rep(A_q)^\Gamma$, so by
Theorem \ref{help2}(ii) we conclude that
$\mathcal{Z}(\Rep(A_q))=\Rep({\mathfrak{u}}_q(\mathfrak{g}))$, as
desired.
\end{proof}

\section{Proof of Theorem \ref{main1}}

Let us first define an action of $\Gamma$ on $\C=\Rep(A_q)$.

For $j=0,\dots,n-1$, $i=1,\dots,r$, let $F_{ij}:\Rep(A_q)\to \Rep(A_q)$ be the functor
defined as follows. For an object $(V,\pi_V)$ in $\Rep(A_q)$,
$F_{ij}(V)=V$ as a vector space, and $\pi_{F_{ij}(V)}(a)=\pi_V(g_i^j a g_i^{-j})$,
$a\in A_q$.

The isomorphism $\gamma_{ij_1,ij_2}:F_{ij_1}(F_{ij_2}(V))\to F_{i,(j_1+j_2)'}(V)$ is given by
the action of
$$
(g_i^n)^{\frac{(j_1+j_2)'-j_1-j_2}{n}}\in A_q,
$$
and
$\gamma_{i_1j_1,i_2j_2}=1$
for $i_1\ne i_2$.

Let us now consider the equivariantization $\C^\Gamma$. By definition, an object
of $\C^\Gamma$ is a representation $V$ of $A_q$ together with a collection of linear
isomorphisms $p_{i,j}:V\to V$, $j=0,\dots,n-1$, $i=1,\dots,r$,
such that
$$
p_{i,j}(av)=g_i^jag_i^{-j}p_{i,j}(v),\; a\in A_q,\; v\in V,
$$
and
$$
p_{i,j_1}p_{i,j_2}=p_{i,(j_1+j_2)'}(g_i^n)^{\frac{-(j_1+j_2)'+j_1+j_2}{n}}.
$$
It is now straightforward to verify that this is the same as a representation
of ${\mathfrak{u}}_q(\mathfrak{b})$, because ${\mathfrak{u}}_q(\mathfrak{b})$ is generated
by $A_q$ and the $p_{i,j}:=g_i^j$ with
exactly the same relations. Moreover, the tensor product of
representations is the same as for ${\mathfrak{u}}_q(\mathfrak{b})^{\Bbb J}$.
Thus $\C^\Gamma$ is naturally equivalent to $\Rep({\mathfrak{u}}_q(\mathfrak{b}))$, as claimed.

This completes the proof of Theorem \ref{main1}.

\end{document}